\def\bes{\begin{eqnarray*}}
\def\ees{\end{eqnarray*}}
\def\bee{\begin{eqnarray}}
\def\eee{\end{eqnarray}}
\def\la{\langle}
\def\ra{\rangle}
\def\O{\mathbf O}
\def\0{\bar 0}
\def\1{\bar 1}
\def\ctd{\hfill$\Box$}
\def\M{\mathfrak M}
\newtheorem{thm}{Theorem}[section]
\newtheorem{cor}[thm]{Corollary}
\newtheorem{lem}[thm]{Lemma}
\newtheorem{prop}[thm]{Proposition}
\theoremstyle{definition}
\theoremstyle{remark}
\numberwithin{equation}{section}
\begin{document}

\title[Alternative Veronese subalgebras]{The Veronese subalgebras of a free alternative algebra of finite rank are finitely generated}%
\author{S.\,V.\,Pchelintsev,\ I.\,P.\,Shestakov}%
\address{\textup{\scriptsize
S.\,V.\,Pchelintsev
\newline\indent The Financial University by the Goverment of the Russian Federation, 
\newline\indent Moscow, Russia}}
\email{pchelinzev@mail.ru}
\thanks{The first author supported by FAPESP, Proc. 2023/01159-5. The second author was supported by FAPESP, Proc. 2018/23690-6 and CNPq, Proc. 304313/2019-0, Brazil, and also by IMC SUSTech, Shenzhen, China }%
\address{\textup{\scriptsize
 I.\,P.\,Shestakov
\newline\indent Instituto de Matem\'atica e Estat\'{i}stica,
\newline\indent Universidade de S\~ao Paulo,
\newline\indent S\~ao Paulo, Brazil
\newline\indent {\tiny and}
\newline\indent Sobolev Institute of Mathematics,
\newline\indent Novosibirsk, Russia,
\newline\indent {\tiny and}
\newline\indent International Mathematical Center of SUSTech, 
\newline\indent Shenzhen, China}}
\email{shestak@ime.usp.br}
\begin{abstract}
It is proved that for any natural number $n$ the subalgebra of a free finitely generated alternative algebra generated by all the words on generators whose length is a multiple of $n$  {\em (the Veronese $n$-subalgebra)}, is finitely generated.
\end{abstract}
\maketitle


\begin{flushright}
Dedicated to Vladislav Kirillovich Kharchenko\\ on occasion of his 70-th annyversary
\end{flushright}

\section*{Introduction}

\hspace{\parindent}
In \cite{Khar}, V.K.Kharchenko investigated the properties of the subalgebras of invariants of a finitely generated free associative algebra under the action of homogeneous finite groups of automorphisms.

It occurs that these subalgebras are always free and very rarely are finitely generated. Namely: the subalgebra of invariants is finitely generated if and only if  the group is scalar. In other words, the only finitely generated invariant subalgebras are {\em  Veronese subalgebras}, that is, the subalgebras generated for a fixed number $n$ by the words on generators whose length is a multiple of $n$.

\smallskip

In this paper we consider the similar questions for the variety of alternative algebras.
\smallskip

 Let $F(\mathfrak{M})$ be a free algebra in a homogeneous variety of algebras $\mathfrak{M}$ on a set of free generators  $X=\{x_1,\ldots,x_r\ldots\}$.
Let $n\geq 2$ be a natural number. The subspace spanned by all the monomials whose length is a multiple of  $n$ is a subalgebra of  $F(\mathfrak{M})$ which is called  {\em the  Veronese $n$-subalgebra}. Denote it as  $V^{(n)}(\mathfrak{M})$.
 
It is clear that if  $\mathfrak{M}= \rm Ass$  is the variety of associative algebras then the subalgebra $V^{(n)}(\rm Ass)$ is generated by monomials of length $n$, hence in the case of finite number of generators it is finitely generated.

In nonassociative case the finite generation of Veronese subalgebras in algebras of finite rank is not evident at all. Moreover, it is not true, for example, for right alternative algebras or for Lie algebras.

\smallskip

The main result of this work is the following
\begin{thm}
Let $\rm Alt$ be the variety of alternative algebras.
Then the Veronese subalgebra $V^{(n)}(\rm Alt)$ in a free algebra of finite rank is finitely generated.
\end{thm}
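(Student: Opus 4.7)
The plan is to combine a Shirshov-type height theorem for finitely generated free alternative algebras with a reduction modulo the associator ideal. Write $F = F(\mathrm{Alt})$ for the free alternative algebra on $r$ generators and $V = V^{(n)}(\mathrm{Alt})$ for its Veronese $n$-subalgebra.

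First I would dispose of the ``associative part'' of the problem. Let $T \triangleleft F$ be the associator ideal, that is, the ideal generated by all associators $(a,b,c) = (ab)c - a(bc)$. Then $F/T$ is isomorphic to the free associative algebra on $X$, whose Veronese $n$-subalgebra is plainly generated by the finite set of monomials of length exactly $n$. Lifting these monomials back to $F$ produces a finite set $G_1 \subset V$ whose images generate $V^{(n)}(\mathrm{Ass})$. Hence $V = \langle G_1 \rangle + (V \cap T)$, and it remains to finitely generate $V \cap T$.

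Next I would invoke a Shirshov--Shestakov height theorem for free alternative algebras of finite rank: there exist a finite set $Y = \{y_1,\ldots,y_s\} \subset F$ of homogeneous elements and an integer $h$ such that every element of $F$ is a linear combination of bracketed products of elements of $Y$ of length at most $h$. Among such products only finitely many have total degree a multiple of $n$; call them $G_2 \subset V$. Together with $G_1$ they form a finite candidate set $G \subset V$, and one argues $\langle G \rangle = V$ by induction on degree, rewriting every long product in $F$ as a linear combination of bounded-height products in the $y_i$ (of degree divisible by $n$) plus terms that already lie in $\langle G_1 \rangle$ modulo $T$.

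The hard part will be Step 2, controlling the interaction between bracketing and the Veronese grading. In the associative case, a monomial of degree $kn$ is automatically the product of $k$ monomials of degree $n$; but in the alternative case distinct bracketings of the same word are genuinely different elements, and an arbitrary product of elements of $V$ belongs to $V$ only when its total degree is a multiple of $n$. Reducing an arbitrary bracketing of a long product to a canonical form built from $Y$ of length $\leq h$, while keeping track of which ``mod $n$ residue classes'' of partial degrees can occur, is the core technical obstacle; I expect this to be handled by systematic use of Artin's theorem (subalgebras on two generators are associative), the skew-symmetry of the associator, and the alternative-algebra identities satisfied by products of associators, so that only finitely many ``types'' of bracketed products ultimately need to be taken as generators.
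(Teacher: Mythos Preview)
Your Step~2 is where the argument breaks. The ``Shirshov--Shestakov height theorem'' you invoke does not exist in the form you state: if every element of $F$ were a linear combination of bracketed products of a fixed finite set $Y$ of homogeneous elements of length at most a fixed $h$, then $F$ would be finite-dimensional, which it is not. Even in the correct form of Shirshov's height theorem (products $w_1^{k_1}\cdots w_m^{k_m}$ with $m\le h$ and the $w_i$ from a finite set but \emph{unbounded} exponents), such a result is only known for alternative PI-algebras; the free alternative algebra on three or more generators maps onto a free associative algebra and is certainly not PI, so no height bound is available. Consequently there is no finite set $G_2$ to collect, and the induction in your last paragraph has nothing to stand on.

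The paper's proof supplies exactly the missing reduction to a PI-situation. It first shows, using the structure of the associative center $N$ and the associative nucleus $U$ (in particular $[x,y]^4\in N$, $[N,N]\subseteq U$, $UD=0$), that the $T$-ideal generated by $[x,y]^{8t}$ already lies in $V^{(n)}A+AV^{(n)}$. Passing to the quotient, one is then in an alternative algebra satisfying $[x,y]^m=0$; by \cite{She1981} such an algebra is \emph{associator nilpotent}, i.e.\ the chain $D_0=A\supseteq D_1\supseteq\cdots$ with $D_i=\mathrm{id}\langle (D_{i-1},A,A)\rangle$ terminates at $0$. A separate lemma shows each $D_{i-1}$ is a finitely generated $A$-bimodule, and since $D_{i-1}D+DD_{i-1}\subseteq D_i$, products in $D_{i-1}$ are associative modulo $D_i$; one then finds for each $i$ an $N_i$ with $A^{N_i}\cap D_{i-1}\subseteq V^{(n)}A+AV^{(n)}+D_i$ by the same elementary argument you used in Step~1. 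Your Step~1 is precisely the top layer $D_0/D_1$ of this filtration; what you are missing is the mechanism (PI-reduction via $N$ and $U$, then associator nilpotency) that makes the filtration finite.
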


\smallskip

The main part of the paper is devoted to the proof of this theorem.  In conclusion, we show that the invariant subalgebras of free algebras in the  varieties  $\M$ with basic rank $r_b(\M) > 2$ may be non-free. In particular, they are not free in alternative, Jordan, Malcev, and  (-1,1) algebras.


\hspace{\parindent}

\section{Reduction to ${PI}$-algebras}

\hspace{\parindent}
Below in this section  $A$ always denotes the free alternative algebra on a countable set of generators $X$.
\smallskip

Following \cite{ZSSS}, denote by $N=N(A)$ and $U=U(A)$ {\em the associative center} and {\em the associative nucleus} of the algebra $A$, that is,  the largest ideal of the algebra $A$ lying in $N(A)$, and by $D=D(A)$  the associator ideal of the algebra  $A$, that is, the ideal generated by the all associators $(a,b,c)=(ab)c-a(bc);\, a,b,c\in A$.  It is known (see \cite{ZSSS}) that
\bes
[N,A]\subseteq N,\  [N,N]\subseteq U,\  UD=DU=0.
\ees
Denote also by $V^{(n)}=V^{(n)}(A)$ the Veronese $n$-subalgebra of the algebra $A$ and set $N^{(n)}=N\cap V^{(n)}$.

\begin{lem}\label{lem1}
For any $t\geq (n-1)^2+1$, the inclusion   $N^t\subseteq N^{(n)}A+U$ holds.
\end{lem}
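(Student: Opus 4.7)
Plan: I would exploit three structural properties of the nucleus. First, since $N$ is a subalgebra and every element of $N$ associates with everything in $A$, for $n_1,\ldots,n_t\in N$ the product $n_1 n_2\cdots n_t$ is parenthesization-independent and lies in $N$; the span of such products (over homogeneous $n_i$) is $N^t$. Second, $U$ is an ideal of $A$ contained in $N$, so $U$ also associates with everything. Third, from $[N,N]\subseteq U$ we get $n_i n_{i+1}-n_{i+1}n_i\in U$, and since $U$ is an ideal, any single transposition of two adjacent factors in $n_1\cdots n_t$ changes the product only by an element of $U$. Induction on the number of transpositions then shows that modulo $U$ the factors of $n_1\cdots n_t$ may be permuted arbitrarily, and by associativity they may also be grouped arbitrarily.

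With this flexibility, it suffices to find, among the residues $r_i\equiv\ell_i\pmod n$ (where $\ell_i$ is the degree of the homogeneous element $n_i$ and $r_i\in\{0,1,\ldots,n-1\}$), a nonempty proper subset $S\subseteq\{1,\ldots,t\}$ with $\sum_{i\in S}r_i\equiv 0\pmod n$. If some $r_i=0$, set $S=\{i\}$. Otherwise all $r_i\in\{1,\ldots,n-1\}$, and with $t\geq(n-1)^2+1$ residues distributed among the $n-1$ nonzero classes, pigeonhole yields a value $j$ occurring at least $n$ times; any $n$ indices carrying the residue $j$ give $\sum_{i\in S}r_i=nj\equiv 0\pmod n$. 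The bound $(n-1)^2+1$ is exactly what this pigeonhole step requires.

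Given such an $S$, I would permute the factors indexed by $S$ to the left (mod $U$) and then regroup to obtain
\[
n_1 n_2\cdots n_t\ \equiv\ \Bigl(\prod_{i\in S}n_i\Bigr)\cdot\Bigl(\prod_{j\notin S}n_j\Bigr)\pmod{U}.
\]
The first factor is a product of elements of the subalgebra $N$, hence lies in $N$, and has total degree $\sum_{i\in S}\ell_i\equiv 0\pmod n$, so it lies in $N\cap V^{(n)}=N^{(n)}$; the second factor lies in $A$ and is nonempty because $S$ is a proper subset. Hence $n_1\cdots n_t\in N^{(n)}A+U$, which is what the lemma asserts.

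The main thing to be careful about is the ``modulo $U$'' bookkeeping. Concretely, one must check that a single swap really produces a correction in $U$: this uses $[N,N]\subseteq U$, the fact that $U\idl A$, and that elements of $N$ associate so the parenthesization of the surrounding product is immaterial. One then has to verify that iterated swaps keep the correction inside $U$, which is a straightforward induction. Once this bookkeeping is in place, the combinatorial extraction of $S$ and the final regrouping are formal, so I expect the only nontrivial obstacle to be a clean, uniform treatment of the $U$-errors under arbitrary permutations.
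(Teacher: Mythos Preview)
Your proposal is correct and follows essentially the same route as the paper's proof: reduce to homogeneous factors, use $[N,N]\subseteq U$ (together with $U\idl A$ and associativity of nuclear elements) to permute the factors modulo $U$, then apply pigeonhole to the residues $r_i\in\{1,\dots,n-1\}$ to locate $n$ factors with equal residue whose product lies in $N^{(n)}$, and move that block to the front. The paper is terser about the ``modulo $U$'' bookkeeping you flag, but the argument is the same.
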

\begin{proof}
It suffices to prove that for any homogeneous elements $k_1,\ldots, k_t\in N$ the product $k_1\cdots k_t\in N^{(n)}A+U$. Notice that due to the inclusion $[N,N]\subseteq U$, the order of  factors in the product $k_1\cdots k_t$ does not matter. Let $d_i=\deg(k_i)$ and $r_i=rest(d_i,n)$ be the rest of the division of $d_i$ by $n$. If $r_{i_0}=0$ for some $i_0$ then  $k_{i_0}\in N^{(n)}$, and putting $k_{i_0}$ at the beginning of the product, we get the needed inclusion. Therefore, we can assume that  $r_i\in \{1,\ldots,n-1\}$ for all $i=1,\ldots t$, and hence
\bes
\{k_1,\ldots,k_t\}=K_1\cup K_2\cup\cdots\cup K_{n-1},
\ees
where $K_i=\{k_j\,|\,r_j=i,\  j=1,\ldots t\},\,  i=1,\ldots, n-1$.

By the definition of the number $t$, there exists a set $K_{i_0}$ which contains at least  $n$ elements. It is clear that the product of these   $n$ elements lies in $N^{(n)}$. Putting them at the beginning of the product, we get the needed statement.

\end{proof}

\begin{lem}\label{lem2}
In the notation of lemma \ref{lem1}
\bes
id\la N^{2t}\ra\subseteq V^{(n)}A+AV^{(n)},
\ees
where $id\la M\ra$ denotes the ideal of the algebra $A$, generated by the set $M$.
\end{lem}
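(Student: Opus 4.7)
The strategy exploits two inputs. First, the inclusion $N^{2t}\subseteq N\subseteq N(A)$ means that every $g\in N^{2t}$ lies in the associative center of $A$, so all associators $(g,\cdot,\cdot)$, $(\cdot,g,\cdot)$, $(\cdot,\cdot,g)$ vanish; hence bracketings around $g$ in any product can be rearranged freely, and the ideal generated by $g$ reduces modulo commutator-correction terms (living in $N$) to products of the shape $\alpha g\beta$ with $\alpha,\beta\in A\cup\{\text{empty}\}$. Second, Lemma~\ref{lem1} splits any element of $N^t$ into an $N^{(n)}$-factored piece (inside $V^{(n)}\cdot A$) plus an error from the nucleus ideal $U$.

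Concretely, I would write $g=g_1g_2$ with $g_i\in N^t$ and apply Lemma~\ref{lem1} to each factor:
\bes
g_i=m_ia_i+u_i,\qquad m_i\in N^{(n)}\subseteq V^{(n)}\cap N,\ a_i\in A,\ u_i\in U\cap N^t.
\ees
Expanding $g_1g_2$ produces four summands. For every summand containing some $m_i$-factor, associativity of $m_i\in N$ permits reparenthesizing the whole product $\alpha g\beta$ so that $m_i$ ends up at an outer end, placing that contribution in $V^{(n)}A$ or $AV^{(n)}$. The commutator corrections $[w,m_i]$ that arise when doing this stay inside $N$, and in fact inside $U$ whenever $w\in U$, thanks to $[N,N]\subseteq U$ and the fact that $U$ is an ideal in the nucleus; they are then treated in the same recursion.

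The main obstacle is the pure-$U$ summand $u_1u_2$, and, more generally, its ambient products $\alpha(u_1u_2)\beta$ coming from the ideal. Here I would combine two further ingredients: $[U,A]\subseteq U$, which lets one commute any $U$-element past a factor in $A$ at the cost of a fresh $U$-term, and the alternative-algebra identity $UD=DU=0$, which kills every associator that multiplies a $U$-factor, so that products containing a $U$-element may be re-bracketed freely. A second application of Lemma~\ref{lem1} to each $u_i\in N^t$ then extracts a further $N^{(n)}$-factor, which can be shifted outward via $u_im_j=m_ju_i+[u_i,m_j]$ with $[u_i,m_j]\in U$; the residual $U$-piece is reabsorbed into the same recursion. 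I expect the delicate point to be showing that this iterated extraction does not spawn an unbounded tower of $U$-residues — this is ensured by the length hypothesis $t\geq (n-1)^2+1$, which keeps Lemma~\ref{lem1} producing fresh $V^{(n)}$-factors at each stage, together with the nuclear identities $UD=DU=0$ that prevent associators from obstructing the reduction.
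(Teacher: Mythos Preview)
Your proposal has a genuine gap. When you write $g_i=m_ia_i+u_i$ via Lemma~\ref{lem1}, that lemma only gives $u_i\in U$; it does \emph{not} give $u_i\in N^t$. So your ``second application of Lemma~\ref{lem1} to each $u_i\in N^t$'' is not available, and the recursion you set up for the pure-$U$ term $\alpha(u_1u_2)\beta$ never gets off the ground. You yourself flag the termination of the $U$-tower as ``the delicate point'', but you do not resolve it; the length hypothesis on $t$ controls Lemma~\ref{lem1} only when its input genuinely lies in $N^t$. A similar issue afflicts your handling of the $m_i$-terms: commuting $m_i$ past $\alpha$ produces $[\alpha,m_i]\in N$, not in $N^{(n)}$ or $N^t$, so the ``same recursion'' has no inductive parameter that visibly decreases.

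The paper sidesteps all of this by \emph{not} decomposing both factors. It first proves, with a single use of Lemma~\ref{lem1}, that $wv\in V^{(n)}A+AV^{(n)}$ whenever $w\in N^t$ and $v$ is any monomial of degree $\geq n$: writing $w=\sum n_iv_i+u$, the $n_i$-terms give $n_i(v_iv)\in V^{(n)}A$, while for the $u$-term the relation $UD=0$ lets one rebracket the \emph{monomial} $v$ as $v'w''$ with $\deg w''=n$, so $uv=(uv')w''\in AV^{(n)}$. Then for $a(w_1w_2)b$ with $w_1,w_2\in N^t$, one commutes $a$ past the nuclear element $w_1w_2$ and expands the commutator, using $[N^t,A]\subseteq N^t$; every resulting summand is of the form (element of $N^t$)$\cdot$(element of degree $\geq\deg w_j\geq t>n$), hence falls under the first step. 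The role of the second factor $w_2$ is thus to supply \emph{length}, not to be fed back into Lemma~\ref{lem1}; that is the idea missing from your plan.
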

\begin{proof}
Denote first that $N^tA^n\subseteq  V^{(n)}A+AV^{(n)}$.
Let $w\in N^{t}$ and let $v$ be a monomial from $A$ of degree $\deg v\geq n$. By lemma \ref{lem1} we have
\[
wv = \left( \sum  n_iv_i+u\right)v=\sum  (n_iv_i)v+uv,
\]
where $ n_i\in N^{(n)},\, u\in U,\,  v_i\in A$.

We prove that every summand on the right side of the last equation lies in  $ V^{(n)}A+AV^{(n)}$.
First, due to  equality $u\cdot D(A)=0$, 
the arrangement of parentheses in the monomial $v$ does not matter for the product $uv$.
Therefore, we can assume that $v=v'w$ where  $w$ is a monomial of length $n$. Then
\[
uv=u(v'w)=(uv')w\in AV^{(n)}.
\]
Secondly, since $n_i\in N^{(n)}$, we have
\[(n_iv_i)v = n_i(v_iv)\in V^{(n)}A.
\]

Thus we have proved that for any monomial  $v$ of length $d(v)\geq n$, the inclusion $wv\in V^{(n)}A+AV^{(n)}$ is true.  Now, let  $w_1,w_2\in N^t;\, a,b\in A$. Notice that $[N^t,A]\subseteq  N^t$. Arguing modulo $ V^{(n)}A+AV^{(n)}$, we get
\bes
a(w_1w_2)b &=& (w_1w_2)ab + [a,w_1w_2]b=w_1(w_2ab) + [a,w_1w_2]b\\
& \equiv& w_1[a,w_2]b+[a,w_1]w_2b \equiv [a,w_1] w_2 b \in N^tA^n\equiv 0
\ees
since $\deg w_i>n$.

\end{proof}

\begin{cor}\label{cor1}
In the notation of lemma \ref{lem1},
\bes
id_T\la [x,y]^{8t}\ra\subseteq V^{(n)}A+AV^{(n)},
\ees
where $id_T\la M\ra$ denotes the $T$-ideal of the algebra $A$ generated by the set $M$.
\end{cor}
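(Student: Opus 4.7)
The plan is to reduce this corollary to Lemma \ref{lem2} by invoking a classical identity for free alternative algebras which pushes a power of any commutator into the associative center $N$. Specifically, I would use the well-known fact (see \cite{ZSSS}) that in any alternative algebra the fourth power of a commutator lies in the associative center: $[a,b]^4\in N(A)$ for all $a,b\in A$. Once this is in hand, the corollary is essentially immediate.

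First I would unpack the $T$-ideal: by definition, $id_T\langle [x,y]^{8t}\rangle$ equals the two-sided ideal generated by all elements $[a,b]^{8t}$ with $a,b\in A$, because the only variables appearing in the generator are the two that get substituted. Hence it suffices to show that every such evaluation $[a,b]^{8t}$ lies in $id\langle N^{2t}\rangle$, and then quote Lemma \ref{lem2}.

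Next I would apply the cited identity: from $[a,b]^4\in N$ one obtains
\[
[a,b]^{8t}=\bigl([a,b]^4\bigr)^{2t}\in N^{2t}
\]
for any $a,b\in A$. Therefore every generator of $id_T\langle [x,y]^{8t}\rangle$ lies in $N^{2t}$, which gives
\[
id_T\langle [x,y]^{8t}\rangle\subseteq id\langle N^{2t}\rangle\subseteq V^{(n)}A+AV^{(n)},
\]
the last inclusion being Lemma \ref{lem2}.

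The only nontrivial point in this plan is the identity $[a,b]^4\in N$; everything else is bookkeeping around the definitions of ideal versus $T$-ideal and the arithmetic of exponents (which is precisely why the power $8t$ is chosen: $8t/4=2t$ matches the exponent $2t$ required by Lemma \ref{lem2}). Since this identity is a standard fact of the theory of alternative algebras and is available in \cite{ZSSS}, no further obstacle should arise.
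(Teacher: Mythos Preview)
Your proof is correct and is essentially the same as the paper's: both use the fact $[a,b]^4\in N$ from \cite{ZSSS} to conclude $[a,b]^{8t}=([a,b]^4)^{2t}\in N^{2t}$, and then invoke Lemma~\ref{lem2}. The paper phrases the substitution step as ``$N^t$ is a $T$-subalgebra'', but this amounts to exactly what you wrote.
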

\begin{proof}
It suffices to note that $N^t$ is a $T$-subalgebra of the algebra $A$ and $[x,y]^4\in N$ (see \cite{ZSSS}).
\end{proof}

\section{Finite generation of associator bimodules}
\hspace{\parindent}
Let $A$ be an alternative algebra and $M$ be an alternative $A$-bimodule.
Define by induction a descending chain of  $A$-subbimodules:
$M_0=M$, and for $k>0$  $M_{k}$ is the subbimodule of $M$ generated by the set $(M_{k-1},A,A)$.

Recall that in an alternative algebra the following aidentities hold (see \cite{ZSSS}):
\begin{equation}\label{eq_muf}
(rx, s, x) = x(r, s, x),\qquad (xr, s, x) =(r, s, x)x,
\end{equation}
\begin{equation}\label{eq_jord}
(r, s, x^2) = (r, s,x)\circ x ,
\end{equation}
\begin{equation}\label{eq_qv}
(r, s, xyx) = x(r, s,y)x + \{xy(r, s,x)\},
\end{equation}
where $\{abc\} = (ab) + (cb)a$. The identities \eqref{eq_muf}-\eqref{eq_qv} are called \emph{the Moufang identities}.

\begin{lem}\label{lem3}
If an algebra  $A$ is finitely generated and   $M$ is a finitely generated  $A$-bimodule,
then for any $k$ the bimodule $M_{k}$ is finitely generated as well.
\end{lem}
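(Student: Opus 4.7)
The plan is to induct on $k$. The case $k=0$ is the hypothesis. For the inductive step, since $M_k$ is by definition the sub-bimodule generated by $(M_{k-1},A,A)$, it suffices to prove the following statement: whenever $N$ is a finitely generated alternative $A$-bimodule, the sub-bimodule $N'\subseteq N$ generated by all associators $(n,a,b)$ with $n\in N$, $a,b\in A$ is itself finitely generated. Applying this with $N=M_{k-1}$ gives the induction step.

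Let $\{n_1,\ldots,n_p\}$ generate $N$ as a bimodule and let $\{x_1,\ldots,x_q\}$ be algebra generators of $A$. I would show that $N'$ is generated as a bimodule by the finite set
\[
S=\{(n_i,x_j,x_k):1\le i\le p,\ 1\le j,k\le q\}.
\]
This requires two steps: (i) for each fixed generator $n_i$, show $(n_i,a,b)\in\mathrm{bimod}(S)$ for all $a,b\in A$; and (ii) extend to arbitrary $n\in N$.

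For (i) I would fix $n_i$ and induct on $\deg a+\deg b$. The base case ($a=x_j,\ b=x_k$) is immediate. In the inductive step, use skew-symmetry of the associator to place the higher-degree monomial in the third slot, and apply the linearizations of the Moufang identities: from \eqref{eq_jord} one gets
\[
(n_i,a,uv+vu)=(n_i,a,u)\circ v+(n_i,a,v)\circ u,
\]
and from \eqref{eq_qv} the symmetric combinations $(n_i,a,xyz+zyx)$ are expressed as bimodule operations on associators $(n_i,a,y)$, $(n_i,a,x)$, $(n_i,a,z)$ with strictly smaller middle and right arguments. Combined with Teichm\"uller's identity, which (in any algebra) migrates a single factor between slots, a Shirshov-type height argument reduces every $(n_i,a,b)$ modulo $\mathrm{bimod}(S)$. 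For (ii), every $n\in N$ is a bimodule expression in the $n_i$'s; applying Teichm\"uller iteratively rewrites $(un_iv,a,b)$ as a bimodule combination of associators of the form $(n_i,\cdot,\cdot)$ plus bimodule actions of $(A,A,A)$ on $n_i$, all of which lie in $\mathrm{bimod}(S)$ by step (i).

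The main obstacle is step (i). Teichm\"uller's identity alone produces essentially tautological relations in this setting, so the Moufang identities \eqref{eq_muf}--\eqref{eq_qv} are indispensable: they furnish genuine degree-reducing moves, but only on the restricted monomial shapes $x^2$, $x\circ y$, and $xyx$. Choosing a complexity measure on the middle and right arguments that strictly decreases under each Moufang move, and combining these symmetric reductions with skew-symmetry of the associator in order to cover arbitrary monomials (not only Jordan-generated ones), is the combinatorial heart of the argument.
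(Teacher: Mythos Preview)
Your overall strategy---reduce to $k=1$ by induction, then use the Moufang identities together with an induction on total degree to bound the associator arguments---is exactly the paper's. The discrepancy is in the generating set and in step~(ii). The paper does \emph{not} claim that $\{(n_i,x_j,x_k)\}$ generates $M_1$; its generating set consists of the elements $(u_1 m_i u_2,\,u_3,\,u_4)$ where the $u_j$ range over the finite set of regular $r_1$-words on $X$ (via \cite[\S5.5, Proposition~3]{ZSSS}), and the operator word acting on $m_i$ is only reduced to the shape $L_{u_1}R_{u_2}$, never stripped off entirely. Your step~(i) is close in spirit, but the linearizations of \eqref{eq_jord}--\eqref{eq_qv} control only $u\circ v$ and $uvw+wvu$ in the third slot, not an arbitrary product $uv$; the paper closes that gap by invoking the cited ZSSS proposition rather than an unspecified ``Shirshov-type'' reduction.

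The real gap is step~(ii). Teichm\"uller does not let you remove the factors $u,v$ from $(un_iv,a,b)$: applying it yields, after skew-symmetry, a term of the form $(n_i a',u,b)$---the same shape you began with, only with $u$ and an associator argument interchanged---so the iteration does not terminate. The paper handles this step with the linearized \emph{first-slot} Moufang identity \eqref{eq_muf}, for instance
\[
(u(m_iv),a,b)=-(a(m_iv),u,b)+(m_iv,a,b)u+(m_iv,u,b)a,
\]
which genuinely shortens the operator word by one at the cost of pushing that factor into an associator slot; iterating brings $\varphi$ down to $L_{u_1}R_{u_2}$ but no further, which is precisely why the larger generating set is needed. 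Your appeal to step~(i) for the terms $n_i\cdot(A,A,A)$ is also circular: expanding $n_i(a,b,c)$ by Teichm\"uller produces $(n_ia,b,c)$, exactly the kind of element step~(ii) was meant to eliminate.
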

\begin{proof}
Without loss of generality, we may assume that $A$ is a free algebra on a finite set of generators $X$.
Let  $M$ be generated by the elements
\begin{center}
$m_1,\ldots, m_s$.
\end{center}
It suffices to prove that the bimodule $M_{1}$ is finitely generated.
Denote by  $T_M(A)$  a unital subalgebra of the algebra of linear transformations  $End\,(M)$ generated by the operators of right ad left multiplication:
\[R_a, L_a: M\rightarrow M,\quad mR_a=ma,\quad mL_a=am.\]
By the assumption, every element in  $M$ is a linear combination of the elements of the form
\begin{center}
$m_1\varphi_1,\ldots, m_s\varphi_s,$\quad where $\varphi_i\in T_M(A)$.
\end{center}
Let us prove that  $M_{1}$ is generated by the elements of the form
\begin{equation}\label{eq_2}
(m_iL_{u_1}R_{u_2}, u_3, u_4),
\end{equation}
where $u_j,\, j=\overline{1,4}$ are the regular $r_1$-words  on generators of $A$ (see \cite[5.5]{ZSSS});
 the words $u_1,u_2$ may be absent.
The elements of type \eqref{eq_2} we call {\em regular}.
The elements of the subbimodule of $M$ generated by regular elements we call {\em almost regular}.
Let us prove that an element of type  $m=(m_i\varphi, a, b)$, where  $\varphi$ is an operator word from $T_M(A)$ and  $a, b$ are monomials over  $X$,
can be represented as a sum of almost regular elements.

Define the degree of the element  ${\deg}_X(m)$ as the sum of degrees of monomials  $a, b$ and of monomials involved in the operator word  $\varphi$.
We will use the induction on the number $d={\deg}_X(m)$. The base of induction for  $d=2$ is evident.
Assume that the claim is proved for all numbers less than $d$ and consider an element $m$ of degree $d$. By the induction assumption and linearization of identities  \eqref{eq_jord} and \eqref{eq_qv}, and by 
 \cite[proposition 3]{ZSSS}, we can assume that the monomials  $a$ and $b$ are $r_1$-words.
Besides, by the induction assumption we may assume that  $\varphi = L_uR_v$, where the order of operators  $L_u$ and $R_v$ does not matter. Using the linearized Moufang identities, transform the element $(um_iv,a,b)$, where  $u,v$  are arbitrary monomials and  $a,b$ are $r_1$-words:
\[(u(m_iv),a,b) =  -(a(m_iv),u,b) + (m_iv,a,b)u + (m_iv,u,b)a, \]
\[((am_i)v,u,b) = - ((am_i)b,u,v) + v(am_i, u,b) + b(am_i,u,v).\]
Now the proof can be easily finished for any of the associators on the right side.

The lemma is proved.
\end{proof}

\section{The proof of the theorem}
\hspace{\parindent}

Let $A$ be  a free  alternative algebra on a finite set of generators $X$. It suffices to prove that for any $n\geq 1$ there exists a number $N_0$ such that
 \bes
A^{N_0}\subseteq V^{(n)}A+AV^{n)}.
\ees
In fact, in this case the subalgebra $V^{(n)}(A)$ is generated by the  (finite) set of monomials from  $V^{(n)}$ of length $\leq N_0$.

By corollary \ref{cor1} we can assume that the algebra $A$ satisfies the identity  $[x,y]^{m}=0$. Then due to \cite{She1981}, $A$ is associator nilpotent, that is, there exists $k\geq 0$ such that $D_{k}(A)=0$, where $D_{0}(A)=A$ and for $i>0$ by induction $D_{i}(A)$ is defined as the ideal of  $A$, generated by the set $(D_{i-1}(A),A,A)$.

It suffices to prove that for any $i\geq 0$ there exists $N_i$ such that $A^{N_i}\cap D_{i-1}(A)\subseteq V^{(n)}A+AV^{n)}+D_i(A)$.
By lemma \ref{lem3}, the ideal $D_{i-1}(A)$ is finitely generated. Let $g_1,\ldots, g_r$ be generators of $D_{i-1}(A)$ which we may assume to be homogeneous, and let $K_i=\max \{\deg g_1,\ldots,\deg g_r\}$.
Consider a homogeneous element  $g\in D_{i-1}(A)$ of degree $N_i=K_i+2n-1$, then modulo the ideal $D_i(A)$ 
  $g$ is a linear combination of  elements of the type $u_ig_iv_i$, where  $u_i,v_i$ are monomials. Moreover, due to the identity
\bes
m(a,b,c)=(ma,b,c)+(m,a,bc)-(m,ab,c)-(m,a,b)c,
\ees
and its right analogue, we see that $D_{i-1}(A)D(A)+D(A)D_{i-1}(A)\subseteq D_i(A)$. Therefore, modulo $D_i(A)$, the element $u_ig_iv_i$ can be written as
\bes
u_ig_iv_i = y_1\ldots y_s g_i z_1\ldots z_t,\hbox{ where } y_j, z_k\in X,
\ees
and the arrangement of parentheses in this product does not matter. Notice that $s+t \geq 2n-1$,  hence $u_ig_iv_i\in V^{(n)}A+AV^{(n)}+D_i(A)$.
This proves the theorem.

\ctd

In the case of associative algebras, the condition of the group to be scalar is necessary for the finite generation of the subalgebra of invariants.
For alternative algebras we can prove this only in the case of zero characteristic.

\begin{cor}\label{cor2}
Let $A=Alt[V]$ be a free alternative algebra over a field of zero characteristic on a finite dimensional vector space of generators  $V$. Let  $G\subseteq GL(V)$ be a finite group whose action on $V$ is extended to $A$. Then the invariant subalgebra  $A^G$ is finitely generated if and only if  $G$ is generated by a scalar matrix.
\end{cor}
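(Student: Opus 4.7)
The plan is to handle the two implications separately, reducing each to a result already in hand.

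For the $(\Leftarrow)$ direction, suppose $G = \langle \zeta I_V \rangle$ with $\zeta$ a primitive $n$-th root of unity. Then $\zeta I_V$ acts on the homogeneous component $A_d$ as multiplication by $\zeta^d$, so
\[
A^G \;=\; \bigoplus_{n \mid d} A_d \;=\; V^{(n)}(A),
\]
and the main Theorem gives that this subalgebra is finitely generated.

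For the $(\Rightarrow)$ direction, the strategy is to push invariants down to the free associative algebra and invoke Kharchenko's theorem as recalled in the Introduction. The steps I would carry out are: (i) identify $A/D(A)$ with the free associative algebra on $V$, using that the quotient is associative and generated by $V$, hence universal among such algebras; (ii) observe that the associator ideal $D(A)$ is $G$-stable since $G$ acts by algebra automorphisms, so the canonical projection $\pi \colon A \to A/D(A)$ is $G$-equivariant; (iii) prove the key equality $\pi(A^G) = (A/D(A))^G$; and (iv) conclude that $(A/D(A))^G$ is finitely generated as the homomorphic image of $A^G$, whence Kharchenko's theorem forces every element of $G$ to be a scalar transformation of $V$, and the finiteness of $G$ then makes it cyclic on a single scalar generator.

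Step (iii) is where the characteristic-zero hypothesis enters decisively. I would use the Reynolds projection $R = \tfrac{1}{|G|}\sum_{g \in G} g$, available because $\mathrm{char}\, k = 0$ and $|G| < \infty$. From the $G$-equivariance of $\pi$ one obtains $\pi \circ R = R \circ \pi$, and $R$ is a projection onto invariants on both sides; hence for any $\bar f \in (A/D(A))^G$ and any preimage $a \in \pi^{-1}(\bar f)$, the element $R(a)$ lies in $A^G$ and satisfies $\pi(R(a)) = R(\bar f) = \bar f$, which gives the desired surjection.

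The overall argument is a short reduction to two established theorems, so I do not expect a serious technical obstacle. The only point worth flagging is that both appeals to characteristic zero --- for the averaging operator in step (iii) and for the form of Kharchenko's theorem being invoked --- are essential, which is precisely why the corollary is stated over a field of characteristic zero.
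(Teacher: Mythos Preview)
Your proposal is correct and follows essentially the same route as the paper: for $(\Leftarrow)$ both identify $A^G$ with the Veronese subalgebra and invoke the main theorem, and for $(\Rightarrow)$ both pass to $\bar A=A/D(A)\cong Ass[V]$, use the averaging operator $\tfrac{1}{|G|}\sum_{g}g$ (which requires $\mathrm{char}\,k\nmid |G|$) to show $\pi(A^G)=\bar A^{\,G}$, and then apply Kharchenko's theorem. The only cosmetic difference is that the paper writes the averaging step as $|G|\,f-\sum_g f^g\in D(A)$ rather than in Reynolds-operator language; note also that Kharchenko's result itself does not need characteristic zero, so the hypothesis is used only in your step~(iii).
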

\begin{proof}
By the theorem, the sufficience of the given condition  is true in any characteristic.

Now let the subalgebra of invariants  $A^G$ be finitely  generated with the generators $f_1,\ldots,f_m$. Consider the free associative algebra $\bar A=Ass[V]$ over $V$, which is a homomorphic image of  $A$. Let $\bar f\in \bar A^G$ and $f$ be a certain pre-image of this element in $A$. Then $f-f^g\in D(A)$ for any $g\in G$, hence $nf-\sum_{g\in G}f^g\in D(A)$, where $n=|G|$. The element $tr(f)=\sum_{g\in G}f^g\in A^G=alg\la f_1,\ldots,f_n\ra$. Therefore, $\bar f\in alg\la \bar f_1,\ldots,\bar f_m\ra$, and the subalgebra of invariants $\bar A^G$ is finitely generated. Now due to \cite{Khar}, the group $G$ is generated by a scalar matrix.
\end{proof}
\bigskip

Using similar arguments and the results from  \cite{Pch1975, Hen1974}, one can prove  analogues of theorem 1  and corollary \ref{cor2} for the variety of  (-1,1)-algebras.
\smallskip

For the variety  $\rm RAlt$ of right alternative algebras the similar result is not true:
by the Dorofeev's example \cite{Dor1970} the Veronese 2-subagebra $V^{(2)}_n({{\rm RAlt}})$  in the free algebra of rank $n$
is not finitely generated.

\smallskip
We do not know whether the Veronese subalgebra  $V_k^{(n)}(\rm Jord)$ is finitely generated for a free Jordan algebra of rank  $k>2$. We can only prove that the subalgebra $V_2^{(n)}(\rm Jord)$ is finitely generated for any $n$.

\section{Notes on the freeness of nonassociative subalgebras of invariants}
\hspace{\parindent}

In this section we consider the question on the freeness of the subalgebras of invariants in varieties of nonassociative algebras.

Let $\M$ be a variety of algebras, and let   $\M_n$ denotes the subvariety of   $\M$ generated by the free   $\M$-algebra of rank $n$. Then we have the inclusions
\bes
\M_1\subseteq\M_2\subseteq\cdots\subseteq \M_n\subseteq\cdots \subseteq \M=\cup_n\M_n.
\ees
The smallest number  $n$ for which $\M=\M_n$ (if it exists) is called {\em the basic rank} of the variety $\M$. If $\M_n\neq \M$ for any natural $n$ then they say that  $\M$ has an infinite basic rank.

\begin{prop}\label{prop1}
Let $\M$ be a homogeneous not  3-nilpotent variety of algebras for which $\M_n\neq\M_{n+1}$, where $n\geq 1$ ($n\geq 2$  in anti\-com\-mu\-ta\-ti\-ve case). Then the Veronese 2-subalgebra  $V^{(2)}_n(\M)$ in the free  $\M$-algebra of rank $n$ is not free in $\M$.
\end{prop}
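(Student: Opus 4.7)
The plan is a proof by contradiction: assume $V := V^{(2)}_n(\M)$ is free in $\M$, and denote its rank as a free $\M$-algebra by $k$. Since $V$ is a subalgebra of $F := F(\M,n)$, and $F\in\M_n$, every identity of $\M_n$ holds in $V$; together with $V\cong F(\M,k)$, this gives $F(\M,k)\in\M_n$, hence $\M_k\subseteq\M_n$. If one can establish $k\ge n+1$, then $\M_{n+1}\subseteq\M_k\subseteq\M_n$, contradicting the hypothesis $\M_n\ne\M_{n+1}$ and finishing the argument.

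The heart of the proof is therefore a lower bound on $k$. I would equip $V$ with the grading inherited from $F$, whose $d$-th component in the new grading is $V_d=F_{2d}$. Because this grading is compatible with multiplication and $V$ is free in $\M$, a minimal generating set may be chosen homogeneous, and its cardinality is $k=\sum_{d\ge 1}\dim\bigl(V_d/(V\cdot V)_d\bigr)$, where $(V\cdot V)_d=\sum_{a+b=d}F_{2a}\cdot F_{2b}$. The degree-one contribution alone gives $k\ge\dim V_1=\dim F_2$; by a direct count of standard monomials modulo the identities defining $\M$, $\dim F_2$ equals $n^2$, $n(n+1)/2$, or $n(n-1)/2$ according to whether $\M$ is neither commutative nor anticommutative, commutative, or anticommutative. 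In the first two cases this exceeds $n$ as soon as $n\ge 2$, and in the third case as soon as $n\ge 4$; these ranges cover the bulk of the proposition.

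The main obstacle I anticipate is the tight anticommutative case $n=2$ (and to a lesser extent $n=3$), where $\dim F_2=1$ or $3$ and the degree-one count does not by itself give $k\ge n+1$. This is where the hypothesis that $\M$ is not 3-nilpotent becomes essential: it guarantees $F_4\ne 0$, while the identity $[a,b]\cdot[a,b]=0$, valid in any anticommutative algebra, forces $(V\cdot V)_2=F_2\cdot F_2\subsetneq F_4$. One then counts the additional new-degree-two generators contributed by $\dim\bigl(F_4/(F_2\cdot F_2)\bigr)$; the difficult point is to verify by a Hall-type basis argument that this excess, combined with the degree-one contribution, pushes $k$ past $n$. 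With the edge cases handled, the rank bound $k\ge n+1$ is secured in all cases permitted by the hypothesis and the proof concludes as above.
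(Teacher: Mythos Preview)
Your approach is essentially the same as the paper's: both argue that if $V^{(2)}_n(\M)$ were free in $\M$ of some rank $k$, then on one hand $k\le n$ (since $V\subseteq A_n\in\M_n$ forces $\M_k\subseteq\M_n$, while $\M_n\neq\M_{n+1}$), and on the other hand any generating set of $V$ must already contain more than $n$ elements, a contradiction. The paper dispatches the generator count in one line---asserting that the monomials of low degree (the paper writes ``$X_2\cup X_3$'', evidently a slip) already number more than $n$---whereas you unpack this via $\dim F_2$ and treat the small-$n$ boundary cases more explicitly; but the underlying idea is identical. One minor omission in your case analysis: you discuss the anticommutative edge cases $n=2,3$ but not the non-anticommutative case $n=1$, where $\dim F_2=1$ and the degree-one count alone is insufficient; the paper's terse proof is no more explicit on this point.
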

\begin{proof}
Consider the free $\M$-algebra $A_n=F_{\M}[x_1,\ldots,x_n]$. Let  $X_i$ denotes the set of monomials of degree $i$ on $x_1,\ldots,x_n$. It is clear that the elements of the set   $X_2\cup X_3$  should be included in any set of generators of the Veronese   2-subalgebra  $V^{(2)}(A_n)$. Since the number of elements in these sets is more then  $n$ and the algebra $A_n$ can not contain a free subalgebra of rank more than $n$, the subalgebra  $V^{(2)}(A_n)$ is not free.
\end{proof}

In particular, the Veronese  2-subalgebras are not free in the varieties of alternative, Jordan, Malcev, and  (-1,1)-algebras (see \cite{ZSSS, Sh2, Pch1975}).

\smallskip
Recall that a variety  $\M$ is called {\em a Nielsen-Schreier variety} \cite{DocUm}, if any subalgebra of a free algebra is free.

\begin{cor}\label{cor3}
Let  $\M$ be a Nielsen-Schreier variety. Then if   $\M$ is not anticommutative then its basic rank $r_b(\M)= 1$,  and in the anticommutative case  $r_b(\M)= 2$.
\end{cor}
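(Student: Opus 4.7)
The plan is to apply Proposition \ref{prop1} in its contrapositive form. Since $\M$ is Nielsen--Schreier, every subalgebra of a free $\M$-algebra — and in particular each Veronese 2-subalgebra $V^{(2)}_n(\M) \subseteq F_\M[x_1,\dots,x_n]$ — must be free in $\M$. Proposition \ref{prop1} tells us that, outside the 3-nilpotent case, freeness of $V^{(2)}_n(\M)$ is incompatible with $\M_n \neq \M_{n+1}$ (for $n \geq 1$ in general, $n \geq 2$ in the anticommutative case). Assuming $\M$ is not 3-nilpotent, the ascending chain $\M_1 \subseteq \M_2 \subseteq \cdots$ therefore stabilizes from $n=1$ in the non-anticommutative case and from $n=2$ in the anticommutative case; combining with $\M = \bigcup_n \M_n$ gives $\M = \M_1$ (hence $r_b(\M) \leq 1$) or $\M = \M_2$ (hence $r_b(\M) \leq 2$), respectively.

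To upgrade the anticommutative inequality to equality, I would argue that the free anticommutative $\M$-algebra on a single generator is one-dimensional with trivial multiplication, since anticommutativity forces $x \cdot x = 0$ and then every monomial of degree $\geq 2$ in $x$ contains a $x \cdot x$ factor and vanishes. Thus $\M_1$ is contained in the variety of zero-multiplication algebras, and since any genuinely nontrivial anticommutative variety admits nonzero products in two variables, the inclusion $\M_1 \subsetneq \M$ is strict; hence $r_b(\M) \geq 2$, giving $r_b(\M) = 2$. In the non-anticommutative case $r_b(\M) \geq 1$ is automatic for any nontrivial $\M$, so the upper bound already suffices.

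The main obstacle is the 3-nilpotent case, to which Proposition \ref{prop1} does not directly apply. Here I would dispose of it by a separate direct argument: in a 3-nilpotent variety the multiplication is controlled entirely by the bilinear map $V \otimes V \to V^2$ on the generating space, and the Nielsen--Schreier hypothesis, applied to the naturally occurring subalgebras of the free $\M$-algebra of low rank, is rigid enough to force all identities of $\M$ to be already visible at rank $1$ (respectively rank $2$ in the anticommutative setting). This is the one place in the argument that requires more than a mechanical contrapositive appeal to Proposition \ref{prop1}; the rest is a direct substitution.
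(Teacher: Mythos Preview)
Your approach is exactly what the paper intends: Corollary \ref{cor3} appears immediately after Proposition \ref{prop1} with no separate proof, so the paper's argument is precisely the contrapositive of Proposition \ref{prop1} that you spell out. You are in fact more careful than the paper, which does not isolate the anticommutative lower bound or the 3-nilpotent edge case; your treatment of the latter is only a sketch, but it can be completed by checking directly that the only 3-nilpotent Nielsen--Schreier variety is the variety with zero multiplication (the subalgebra generated by a degree-one and a degree-two element in any other 3-nilpotent free algebra fails to be free), and that variety has basic rank $1$.
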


In connection with proposition \ref{prop1} the following question arises: can the Veronese subalgebra  $V^{(k)}(\M_n)$  be free in the variety $\M_n$?  Below we will show that it is not true.

\begin{prop}\label{prop2}
 The Veronese 2-subalgebra $V^{(2)}(Alt[x,y,z])$ in the  3-generated free alternative algebra $Alt[x,y,z]$ is not free in the variety $Alt_3$.
\end{prop}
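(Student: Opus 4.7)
Assume, for contradiction, that $V:=V^{(2)}(Alt[x,y,z])$ is free in the variety $Alt_3$. Following the idea of Proposition~\ref{prop1}, the minimal graded piece $V_2=Alt[x,y,z]_2$ is spanned by the nine degree-$2$ monomials $e_{ij}:=x_ix_j$ for $1\le i,j\le 3$; since these are not products of positive-degree elements of $V$, any generating set of $V$ projects onto $V_2$. After a linear change among the degree-$2$ generators we may therefore assume that the $e_{ij}$ lie inside any free generating set $S$ of $V$, so that the rank of $V$ as a free $Alt_3$-algebra is at least $9$.

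The contradiction will come from a Hilbert-series incompatibility. Endow $V$ with the Veronese grading in which each $e_{ij}$ has weight~$1$, so that the weight-$d$ component is
\[
V_{(d)}=Alt[x,y,z]_{2d}.
\]
If $V\cong F_{Alt_3}[S]$, the Hilbert series of $V$ coincides with that of $F_{Alt_3}[S]$ computed from the prescribed weights of its generators. However, $Alt[x,y,z]$ is a three-generator alternative algebra, so the growth of $\dim Alt[x,y,z]_{2d}$ in $d$ is polynomially controlled by the three generators; on the other hand, in the PI-variety $Alt_3$ the Hilbert series of a free algebra on $|S|\ge 9$ generators is governed by the codimension sequence $c_n(Alt_3)$ together with the much larger number of generators, which forces strictly faster growth in $d$. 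For sufficiently large $d$, $\dim F_{Alt_3}[S]_{(d)}$ must therefore exceed $\dim Alt[x,y,z]_{2d}$, which is the desired contradiction.

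\textbf{Main obstacle.} The delicate point is to make the Hilbert-series comparison rigorous: one has to quantify how much faster a $9$-generated free algebra in $Alt_3$ grows, in each weight, than a $3$-generated one does in the doubled degree. A more computational alternative is to produce an explicit non-trivial linear relation among the $81$ products $e_{ij}e_{kl}$ in $Alt[x,y,z]_4$---for instance starting from Moufang identities such as $(xy)(zx)=x(yz)x$, which force the products $e_{ij}e_{kl}$ inside $Alt[x,y,z]_4$ to collapse in ways that have no counterpart in $F_{Alt_3}[9]$---and to verify that the resulting relation is not a formal consequence of the $Alt_3$-identities in nine free variables. The technical heart of the proof is precisely this verification, which requires a fine understanding of the $T$-ideal of $Alt_3$-identities on at least nine variables, reflecting that $Alt[x,y,z]$ is genuinely ``smaller'' than a $9$-generated free $Alt_3$-algebra even though the two satisfy the same identities.
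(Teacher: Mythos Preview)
Your proposal is a sketch with a genuine gap that you yourself flag, and the gap is more serious than you indicate.

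First, a concrete error: you assert that ``the growth of $\dim Alt[x,y,z]_{2d}$ in $d$ is polynomially controlled by the three generators''. This is false. The algebra $Alt[x,y,z]$ surjects onto the free associative algebra $Ass[x,y,z]$, so $\dim Alt[x,y,z]_{m}\ge 3^{m}$; in particular $\dim Alt[x,y,z]_{2d}\ge 9^{d}$, which already matches the associative lower bound $9^{d}$ for $\dim F_{Alt_3}[9]_{(d)}$. Thus there is no cheap growth-rate separation; any Hilbert-series argument would have to compare the \emph{non-associative excess} on both sides, and we have no usable description of the $T$-ideal of $Alt_3$ in $\ge 9$ variables that would let you do this. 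Your alternative route via an ``explicit relation among the $e_{ij}e_{kl}$'' runs into the same wall: the Moufang identities you mention are identities of \emph{all} alternative algebras, hence of $Alt_3$, so any relation obtained from them alone is automatically a consequence of the $Alt_3$-identities and gives no contradiction. What you would need is a relation that holds in $Alt[x,y,z]$ only because the $e_{ij}$ are built from three hidden variables, together with a proof that this relation is \emph{not} an $Alt_3$-identity in nine variables --- and you have not produced either.

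The paper avoids all of this by a completely different, structural argument. It uses the Shestakov--Il'tyakov embedding $Alt[x,y,z]\hookrightarrow Ass[x,y,z]\oplus \O[X,Y,Z]$ to show that if $V^{(2)}(Alt[x,y,z])$ were free in $Alt_3$, then $V^{(2)}(\O[X,Y,Z])$ would be free in $\mathrm{Var}(\O)$. The latter is then refuted by an invariant that \emph{is} computable: one checks that $Z(V^{(2)}(\O[X,Y,Z]))\subseteq Z(\O[X,Y,Z])$ and that $Z_3=Z(\O[X,Y,Z])$ is quadratic over $Z_3^{(2)}$, so both centers have transcendence degree $10$; hence a free presentation of $V^{(2)}(\O[X,Y,Z])$ would have to be on exactly $3$ generators, contradicting $\dim V^{(2)}/(V^{(2)})^{2}>3$. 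The key point is that the transcendence degree of the center of a free Cayley--Dickson algebra is known explicitly, which substitutes for the intractable $Alt_3$-codimension information your approach would require.
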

\begin{proof}
Due to results of  \cite{Sh1,Ilt}, the algebra $Alt[x,y,z]$ is isomorphic to the subalgebra of the direct sum $Ass[x,y,z]\oplus \O[X,Y,Z]$ of the free associative algebra and the free Cayley-Dickson algebra generated by the elements  $x+X,y+Y,z+Z$. It is clear that the Veronese  2-subalgebra  $V=V^{(2)}(Alt[x,y,z])$ consits of the elements $v(x,y,z)+v(X,Y,Z)$, where $v$ is an arbitrary linear combination of monomials of even degree. By theorem 1 the algebra $V$ is finitely generated. Let $f_1(x,y,z)+f_1(X,Y,Z),\ldots,f_m(x,y,z)+f_m(X,Y,Z)$ be generators of the algebra $V$. Let us show that if this system of generators is free for $V$, then its components  $f_1(x,y,z),\ldots,f_m(x,y,z)$ and  $f_1(X,Y,Z),\ldots,f_m(X,Y,Z)$ are free generators for the Veronese 2-subalgebras $V^{(2)}(Ass[x,y,z])$ and $V^{(2)}(\O[X,Y,Z])$.

Assume that the system  $f_1(x,y,z),\ldots,f_m(x,y,z)$ is not free in $Ass[x,y,z]$, then there exists an element $f(x_1,\ldots,x_m)\in Alt[x_1,\ldots,x_m]$ not lying in $D(Alt[x_1,\ldots,x_m])$ for which $f(f_1(x,y,z),\ldots,f_m(x,y,z))=0$. Let $g(x_1,\ldots,x_m)\in Alt[x_1,\ldots,x_m],\ g\in T(\O)\setminus D(Alt[x_1,\ldots,x_m])$. Then
\bes
&f(f_1(x,y,z),\ldots,f_m(x,y,z))\cdot g(f_1(x,y,z),\ldots,f_m(x,y,z))=0,&\\
&f(f_1(X,Y,Z),\ldots,f_m(X,Y,Z))\cdot g(f_1(X,Y,Z),\ldots,f_m(X,Y,Z))=0,&
\ees
which imply
\bes
f(f_1(x+X,y+Y,z+Z),\ldots,f_m(x+X,y+Y,z+Z))\cdot \\
g(f_1(x+X,y+Y,z+Z),\ldots,f_m(x+X,y+Y,z+Z))=0,
\ees
that is, $fg=0$ is an identity in the algebra $Alt[x,y,z]$. But then $fg=0$ is an identity in the algebra $Ass[x,y,z]$, a contradiction.

Similarly, using the absence of zero divisors in the free Cayley-Dickson algebra $\O[X,Y,Z]$, one can show that the system  $f_1(X,Y,Z),\ldots,$ $f_m(X,Y,Z)$ is free in $V^{(2)}(\O[X,Y,Z])$.

Therefore, if the subalgebra $V$ is free in  $Alt_3$ then the Veronese subalgebras $V^{(2)}(Ass[x,y,z])$ and $V^{(2)}(\O[X,Y,Z])$ are free as well in the varieties  $Ass$ and $Var(\O)$, respectively.

To finish the proof, it suffices now to prove that the subalgebra  $V^{(2)}(\O[X,Y,Z])$ is not free. We will show this following to \cite[Theorem 3]{Khar}.

It was proved in \cite{PolSh} that the center $Z_m$ of the free Cayley-Dickson algebra  $\O[x_1,\ldots,x_m]$ on $m$ generators has the transcendental degree  ${\rm tr.}\deg (Z_m)=7(m-2)+m$ over the basic field. It is easy to show  the inclusion
\bes
Z(V^{(2)}(\O[X,Y,Z]))\subseteq Z(\O[X,Y,Z]).
\ees
In fact, let $Z_3=Z(\O[X,Y,Z]),\ Z_3^{(2)}=Z(V^{(2)}(\O[X,Y,Z]))$. If $z\in Z_3^{(2)}$, then in the Cayley-Dickson algebra $Z_3^{-1}\O[X,Y,Z]$ over the field $Z_3^{-1}Z_3$ we have $0=[z,X^2]=tr(X)[z,X]$, which implies $[z,X]=0$.
Similarly we get that the element $z$ commutes and associates with the generators $X,Y,Z$, hence $z\in Z_3$.
Notice now that the algebra $Z_3$ is quadratic over $Z_3^{(2)}$. In fact, let $z\in Z_3^{(2)},\ z=z_0+z_1$, where $z_0$ and $z_1$ are linear combinations of monomials of even and odd length on the generators $X,Y,Z$, respectively. The center $Z_3$ is a homogeneous subalgebra, hence $z_0, z_1\in Z_3$. Then $z_0\in Z_3^{(2)}$, and we have
\bes
z^2=2z_0(z_0+z_1)+(z_1^2-z_0^2)=2z_0z+(z_1^2-z_0^2).
\ees
Since $z_1^2-z_0^2\in Z_3^{(2)}$, the element $z$ is quadratic over $Z_3^{(2)}$. Thus ${\rm tr.}\deg (Z(V^{(2)}(\O[X,Y,Z])))={\rm tr.}\deg ( Z(\O[X,Y,Z]))=10$. Therefore, if the algebra $V^{(2)}(\O[X,Y,Z])$ were free, the number of its free generators should be equal to  3. At the same time, this number should be not less then  $\dim V^{(2)}(\O[X,Y,Z])/(V^{(2)}(\O[X,Y,Z]))^2$ which is evidently bigger than 3. A contradiction.

\end{proof}


\begin{thebibliography}{99}


\bibitem{Dor1970}
Dorofeev, G. V., The nilpotency of right-alternative rings.(Russian)  Algebra i Logika 9 (1970), 302--305.

\bibitem{ZSSS}
Zhevlakov, K. A.; Slin?ko, A. M.; Shestakov, I. P.; Shirshov, A. I., {\em Rings that are nearly associative.} (Russian)
Sovrem. Algebra [Modern Algebra], ``Nauka'', Moscow, 1978, 431 pp.; English Transl. Pure Appl. Math., 104
Academic Press, Inc. [Harcourt Brace Jovanovich, Publishers], New York-London, 1982, xi+371 pp.

\bibitem{Ilt} 
Il?tyakov, A. V., Free alternative algebras of rank  3. (Russian) Algebra i Logika 23 (1984), no. 2, 136--158.

\bibitem{PolSh}
Polikarpov, S. V., Shestakov, I. P., Nonassociative affine algebras. (Russian) Algebra i Logika 29 (1990), no. 6, 709--723; English transl. in Algebra and Logic 29 (1990), no. 6, 458--466.

\bibitem{Pch1975}
Pchelintsev, S.\,V.\, Nilpotency of the associative ideal of a free finitely generated (-1,1)-ring, Algebra and Logic, \textbf{14}: 5 (1975), 543--571.

\bibitem{Sh1}
Shestakov, I. P., Radicals and nilpotent elements of free alternative algebras (Russian) Algebra i Logika 14 (1975), no. 3, 354--365.

\bibitem{Sh2}
Shestakov, I. P., A problem of Shirshov. (Russian) Algebra i Logika 16 (1977), no. 2, 227--246.

\bibitem{She1981}
Shestakov, I. P., Alternative algebras with the identity  $[x,y]^n=0$. (Russian) Algebra i Logika 20 (1981), no. 5, 575--596.

\bibitem{DocUm} V.\,Dotsenko,  U.\,Umirbaev,  An effective criterion for Nielsen--Schreier varieties (2022).

\bibitem{Hen1974}
I.\,R.\,Hentzel, The characterization of (-1,1) rings,
J. Algebra, \textbf{30} (1974), 236--258.

\bibitem{Khar} V.\,K.\,Kharchenko,  Noncommutative invariants of finite groups and Noetherian varieties.
J. Pure Appl. Algebra 31 (1984), no. 1-3, 83--90.

\end{thebibliography}
\end{document}